\begin{document}

\title*{An overview of the balanced excited random walk}

\author{Daniel Camarena, Gonzalo Panizo and Alejandro 
  F. Ram\'\i rez}

\institute{Daniel Camarena \at Instituto de Matem\'atica y Ciencias 
  Afines,  Universidad Nacional de Ingenier\'\i a 
  Calle Los Bi\'ologos 245- Urb. San C\'esar-Primera Etapa 
  La Molina, Lima 12, Per\'u, \email{vcamarenap@uni.pe}
\and Gonzalo Panizo \at Instituto de Matem\'atica y Ciencias 
  Afines,  Universidad Nacional de Ingenier\'\i a 
  Calle Los Bi\'ologos 245- Urb. San C\'esar-Primera Etapa 
  La Molina, Lima 12, Per\'u, 
\email{  gonzalo.panizo@gmail.com}
\and Alejandro F. Ram\'\i rez \at
Facultad de Matem\'aticas
  Pontificia Universidad Cat\'olica de Chile
  Vicu\~na Mackenna 4860, Macul
  Santiago, Chile
\email{aramirez@mat.uc.cl}}

%
%
\maketitle

\abstract{ The balanced excited random walk, introduced by 
  Benjamini, Kozma and Schapira in $2011$, is defined as a discrete 
  time stochastic process in $\mathbb Z^d$, depending on two integer 
  parameters $1\le d_1,d_2\le d$, 
  which whenever it is at a site $x\in\mathbb Z^d$ at time $n$, it 
  jumps to $x\pm e_i$ with uniform probability, where $e_1,\ldots,e_d$ are the canonical 
  vectors, for $1\le i\le d_1$, if the site $x$ was visited for the 
  first time at time $n$, while it jumps to $x\pm e_i$ with uniform probability, for 
  $1+d-d_2\le i\le d$, if the site $x$ was already visited before time 
  $n$. Here we give an overview of this model when $d_1+d_2=d$ and 
  introduce and study the cases when $d_1+d_2>d$. In particular, we 
  prove that for all the cases $d\ge 5$ and most cases $d=4$, the 
  balanced excited random walk is transient.}

\section{Introduction}
\label{introduction}

We consider an extended version of the balanced 
excited random walk introduced by Benjamini, Kozma and Schapira in 
\cite{BKS11}. The balanced excited random walk is defined in any 
dimension $d\ge 2$, and depends on two integers 
$d_1,d_2\in \{1,\ldots,d\}$. For each $1\le i\le d$, let 
$e_i=(0,\ldots,0,1,0,\ldots,0)$ be the canonical vector whose $i$-th 
coordinate is $1$, while all other coordinates are $0$. We define the 
process $(S_n:n\ge 0)$, called the {\it balanced excited random walk}
on $\mathbb Z^d$ as a mixture of two simple random walks, with the 
initial condition $S_0=0$: if at time $n$, $S_n$ visits a site for the 
first time, with probability $1/(2 d_1)$, at time $n+1$ it performs a 
simple random walk step using one of the first $d_1$ coordinates, so 
that for all $1\le i\le d_1$, 

$$
\mathbb P[S_{n+1}-S_n=e_i|\mathcal F_n, S_n\ne S_j\ {\rm for}\ {\rm 
  all}\ 1\le j<n]
=\frac{1}{2 d_1}, 
$$
where $\mathcal F_n$ is the $\sigma$-algebra generated by 
$S_0,\ldots, S_n$; on the other hand, if at time $n$, $S_n$ visits a 
site it has previously visited, at time $n+1$ it performs a simple 
random walk using one the last $d_2$ coordinates, so that for all 
$d-d_2+1\le i\le d$, 

$$
\mathbb P[S_{n+1}-S_n=e_i|\mathcal F_n, S_n= S_j\ {\rm for}\ {\rm 
  some}\ 1\le j<n]=\frac{1}{2 d_2}. 
$$

We call this process $S$ the $M_d(d_1,d_2)$-random walk. In 
\cite{BKS11}, this random walk was considered in the case when 
$d_1+d_2=d$, which we call the {\it non-overlapping} case. Here we 
will focus on the {\it overlapping} case corresponding to $d_1+d_2>d$. 

We say that the $M_d(d_1,d_2)$-random walk is transient if any 
site is visited only finitely many times, while we say that it 
is recurrent if it visits every site infinitely often. 
Since a random walk $M_d(d_1,d_2)$ is not Markovian, in principle could be neither transient nor recurrent. 

For the non-overlapping case, in 2011 in \cite{BKS11} it was shown 
that the  $M_4(2,2)$-random walk is transient, while in 2016, Peres, 
Schapira and Sousi in \cite{PSS16}, showed that the 
$M_3(1,2)$-random walk is transient, but the transience of  $M_3(2,1)$-random walk is still an open question. 

The main result of this article is the following theorem concerned 
with the overlapping case. 

\medskip 

\begin{theorem}
	\label{proposition} For every $(d,d_1,d_2)$ with $d\ge 4$, $1\le d_1,d_2\le d$, 
         $d_1+d_2>d$ and $(d,d_1,d_2)\ne (4, 3,2)$, the $M_d(d_1,d_2)$-random walk is transient. 
\end{theorem}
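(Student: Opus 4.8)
The plan is to represent $S$ as a functional of two independent simple random walks and then argue according to the ``effective dimension'' of three blocks of coordinates. Partition $\{1,\dots,d\}$ into the \emph{fresh--only} block $F=\{1,\dots,d-d_2\}$, the \emph{overlap} block $O=\{d-d_2+1,\dots,d_1\}$ of size $k:=d_1+d_2-d\ge 1$, and the \emph{visited--only} block $V=\{d_1+1,\dots,d\}$. Let $A$ be a simple random walk on $\mathbb Z^{d_1}$ and $B$ an independent simple random walk on $\mathbb Z^{d_2}$, realized so that $A$ supplies the increments of the successive \emph{fresh} steps of $S$ and $B$ those of the successive \emph{repeat} steps. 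Writing $R'_n$ for the number of fresh steps among $0,\dots,n-1$ (so $|R'_n-R_n|\le 1$, where $R_n$ is the range) and $R''_n:=n-R'_n$, one obtains the coupling
\[
S_n^{F}=A^{F}(R'_n),\qquad S_n^{V}=B^{V}(R''_n),\qquad S_n^{O}=A^{O}(R'_n)+B^{O}(R''_n).
\]
This comes with a dichotomy: either $S$ is eventually self--avoiding, hence visits every site finitely often and is transient; or it makes infinitely many repeat steps, i.e.\ $R''_n\to\infty$. Moreover $R'_n\to\infty$ always, since a trajectory confined to finitely many sites would eventually make only repeat steps, forcing the unbounded walk $B$ to appear in the $O$-- or $V$--coordinates of $S$.

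Next I would dispose of the cases with a transient projection. The $O$--coordinates of $S$ move, regardless of the fresh/repeat status, with conditional probability at least $k/d>0$ at each step, so $S^{O}$ is a genuine simple random walk on $\mathbb Z^{k}$ run along a clock $N_n$ with $N_n\ge \tfrac{k}{2d}n$ eventually a.s.\ (Borel--Cantelli against a binomial); hence $k\ge 3$ forces $|S_n^{O}|\to\infty$ and $S$ is transient. Likewise $S^{F}=A^{F}(R'_n)$ is a lazy walk on $\mathbb Z^{d-d_2}$ run for a time tending to infinity, so $d-d_2\ge 3$ gives transience, and symmetrically $d-d_1\ge 3$ does too. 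A short inspection shows that the triples not covered by $k\ge 3$, $d-d_1\ge 3$ or $d-d_2\ge 3$ are precisely: $d=4$ with $(d_1,d_2)\in\{(2,3),(3,2),(3,3),(2,4),(4,2)\}$; $d=5$ with $(d_1,d_2)\in\{(3,3),(3,4),(4,3)\}$; and $(d,d_1,d_2)=(6,4,4)$.

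For the remaining cases with $d\ge 5$ I would prove $\sum_{n}\sup_{x\in\mathbb Z^d}\mathbb P[S_n=x]<\infty$, which implies transience by the first Borel--Cantelli lemma applied to each $x$ and intersected over the countably many sites. Summing over the value $r$ of $R'_n$ and discarding the constraint $\{R'_n=r\}$ gives $\mathbb P[S_n=x]\le \sum_{r=1}^{n-1}\sum_{z\in\mathbb Z^{k}}\mathbb P[A(r)=(x^{F},z)]\,\mathbb P[B(n-r)=(x^{O}-z,x^{V})]$; since $A\perp B$, the local central limit theorem (the Gaussian factors only helping, uniformly in $x$) bounds this by $C\,n^{-k/2}\sum_{r=1}^{n-1}r^{-(d-d_2)/2}(n-r)^{-(d-d_1)/2}$, and as $(d-d_2)+(d-d_1)=d-k$ the last sum is $O\!\big((\log n)\,n^{1-(d-k)/2}\big)$, so $\mathbb P[S_n=x]\le C(\log n)\,n^{1-d/2}$, which is summable for $d\ge 5$.

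Finally, the four cases $d=4$ with $(d_1,d_2)\in\{(2,3),(3,3),(2,4),(4,2)\}$. Here the previous estimate only yields $\mathbb P[S_n=x]=O((\log n)/n)$ and one expects $\sum_n\mathbb P[S_n=0]=\infty$, so transience cannot come from a first--moment bound; instead I would refine the Benjamini--Kozma--Schapira method used for $M_4(2,2)$ in \cite{BKS11}. Two ingredients are needed: (a) a \emph{near--linear} lower bound on the range, $R_n\ge n/(\log n)^{C}$ eventually a.s., which holds in exactly these four cases because $\max(k,\dim V)\ge 2$, so either $S^{O}$ (when $k\ge 2$) or $S^{V}$ (a two--dimensional walk run for time $R''_n$, when $\dim V=2$) spreads enough — combined with the trivial fact that any projection of $S$ takes at most $R_n$ distinct values; and (b) a lower bound $R''_n\ge n^{\gamma}$ eventually a.s.\ ruling out the ``nearly self--avoiding'' regime, obtained by exploiting that the fresh block already produces many self--coincidences of $A$, each of which must be resolved by a repeat step. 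Feeding (a), (b), the local--CLT estimates and the two--clock structure into a multi--scale argument in the spirit of \cite{BKS11} then gives transience. The triple $(4,3,2)$ is excluded because there $k=1$ and $\dim V=1$, so $\max(k,\dim V)=1$ and only $R_n\gtrsim\sqrt n$ is available — too weak for the argument to close, exactly as for the still--open $M_3(2,1)$ (whereas $M_3(1,2)$ in \cite{PSS16} has $\dim V=2$ and a near--linear range). The main obstacle is precisely this last step: decoupling the random clocks $R'_n,R''_n$ from $A$ and $B$ — the crude ``sum over $r$'' that settles $d\ge 5$ being too lossy at $d=4$ — and in particular establishing the lower bound (b) on the number of repeat steps.
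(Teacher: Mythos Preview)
Your reduction to the listed residual cases is correct and coincides with the paper's (the quantities $k$, $d-d_2$, $d-d_1$ are the paper's $r$, $d_1-r$, $d_2-r$). For the $d\ge 5$ cases your route is genuinely different: the paper invokes the trace condition of Peres--Popov--Sousi, which immediately handles every triple with $\min(d_1,d_2)\ge 3$ (in particular all of $d=5,6$ and also $M_4(3,3)$), whereas you argue via the coupling $S_n=(A^{F}(R'_n),\,A^{O}(R'_n)+B^{O}(R''_n),\,B^{V}(R''_n))$, a union bound over the value of $R'_n$, and local CLT estimates, obtaining $\sup_x\mathbb P[S_n=x]\le C(\log n)\,n^{1-d/2}$. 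Your bound is valid (dropping the event $\{R'_n=r\}$ is legitimate because $A\perp B$, and the convolution over $z\in\mathbb Z^{k}$ of two Gaussian-type kernels yields the factor $n^{-k/2}$); this is an elegant and more elementary alternative to the trace condition for $d\ge 5$. Note, however, that the trace condition buys you $M_4(3,3)$ as well, with no further work.

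The genuine gap is at $d=4$. Your sketch there is not a proof, and for two of the four cases the paper does \emph{not} proceed by a BKS-type refinement. For $M_4(3,3)$ the paper simply applies the trace condition (both $d_1,d_2\ge 3$), so no range or clock analysis is needed. For $M_4(2,3)$---the hardest case---the paper adapts the martingale-defocusing method of Peres--Schapira--Sousi: one conditions on the two-dimensional walk $U$ formed by the last two coordinates, observes that the first coordinate sampled at the jump times of $U$ is a $P_U$-martingale, controls its increments by i.i.d.\ geometric variables, and then uses a quantitative anti-concentration bound (their Proposition~2.1) together with a range estimate for $U$ along the dyadic scales $t_k=n-n/2^k$. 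The ``decoupling obstacle'' you flag is exactly why a straight BKS argument is not known to close for $M_4(2,3)$, and your ingredients (a)--(b) do not substitute for this martingale machinery. Your BKS-style plan is, on the other hand, the right one for $M_4(2,4)$ and $M_4(4,2)$, where the paper indeed proceeds by splitting the range into the counts $r_n^{(1)},r_n^{(2)}$ of fresh steps moving the first two versus last two coordinates and then applying the two-dimensional return estimate of \cite{BKS11} to each factor.
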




\medskip 


Theorem \ref{proposition} has a simple proof for $d\ge 7$, for all
admissible values of $d_1$ and $d_2$. 
 Let $r:=d_1+d_2-d$. Note that if $r\ge 3$ then the walk is transient,
 since its
 restriction to the $r$ overlapping coordinates is at least a
 $3$-dimensional simple symmetric random walk with geometrically
 bounded holding times. We will argue in the next two paragraphs that
 the walk is also transient if $d_1-r\ge 3$ or if $d_2-r\ge 3$.
 Assuming for the moment that each of the three inequalities
 $r\ge 3$, $d_1-r\ge 3$ or $d_2-r\ge 3$ implies transience, note
 that if none of them holds we have that $d=d_1+d_2-r\le 6$. We
 conclude
 that for $d\ge 7$ the walk is transient for all admissible values of
 $d_1$ and $d_2$.

 {\it Case $d_2-r\ge 3$.} We claim that with probability $1$ the
 fraction of times when the random walk uses the last $d_2-r$
 coordinates
 is asymptotically bounded from below by a positive constant
 and therefore, the random walk is transient. To see this note that
 whenever
 the walk makes $3$ consecutive steps, the probability that in at
 least
 one of these steps it visits a previously visited (old) site is
 bounded away from $0$. Indeed, if in two consecutive steps
 the walk visits two previously unvisited (new) sites then with
 probability $1/(2d_1)$ it backtracks in the next step and, thus,
 visits an old site.

 {\it Case $d_1-r\ge 3$.}  We claim that the number of times the
 random walk uses the first $d_1-r$
 coordinates goes to infinity as $n\to\infty$, which is enough
 to prove transience.
 Denote by $r_n$ the number of points in the range of the walk
 at time $n$. We will show that $r_n\to\infty$ as $n\to\infty$ a.s.
For $k\ge 1$, let $n_k=\inf\{n\ge 0: r_n=k\}$.
 We will argue that if 
   $n_k<  \infty$ then with  probability one 
 $n_{k+1}<\infty$. Note that $r_{n_k}=k$, $S_{n_k}$ is a new site, and
 there
 are $k-1$ other sites in the range. Let $n_k<\infty$ and $A_1$ be the
 event that in the next $k$ steps the walk
 jumps only in positive coordinate directions. On $A_1$, at times
 $n_k+1, n_k+2,\ldots, n_k+k$ the walk visits $k$
 distinct sites of $\mathbb Z^d-\{S_{n_k}\}$. Among these sites there
 are at most $k-1$ old sites. Therefore, on the event
 $A_1\cap\{n_k<\infty\}$ the walk will necessarily visit a new site
 and
 $n_{k+1}\le n_k+k<\infty$. Note that the probability of $A_1$ (given
 $n_k<\infty$)
 is $2^{-k}$. If $A_1$ does not occur, then we consider the next $k$
 steps
 and define $A_2$ to be the event that in these next $k$ steps the
 walk
 jumps only in the positive coordinate directions, and so on. Since,
 conditional on $n_k<\infty$, the events $A_1,A_2,\ldots$ are
 independent
 and each has probability $2^{-k}$, we conclude that $n_{k+1}<\infty$
 with probability one.

 Therefore, to complete the proof of Theorem \ref{proposition} we have
 only
 to consider the cases $d=4,5,6$. It will be shown below that the cases 
$d=5,6$ and several cases in $d=4$, can be derived in an elementary way 
sometimes using the trace condition of \cite{PPS13}. In a less straightforward way the 
cases $M_4(2,4)$ and $M_4(4,2)$ can be treated through the methods of \cite{BKS11}. 
The case $M_4(2,3)$ which is more involved, can be treated through a modification 
of methods developed by Peres, Schapira and Sousi \cite{PSS16} for the 
$M_3(1,2)$-random walk through good controls on martingale increments by sequences of geometric 
i.i.d. random variables. It is not clear how the above 
mentioned 
methods could be applied to the  $M_4(3,2)$-random walk to settle down the 
transience-recurrence question for it, so this case remains open. 


In Section \ref{section-nonoverlapping} we will give a quick review of 
the main results that have been previously obtained for the 
non-overlapping case of the balanced excited random walk. In Section 
\ref{prooft}, we will prove Theorem \ref{proposition}.  In Section 
\ref{d=54}, we will introduce the trace condition of \cite{PPS13}, 
which will be used to prove the cases $d=5,6$ and several cases 
in dimension $d=4$. In Section \ref{s24}, we will prove the transience 
of the random walks $M_4(2,4)$ and $M_4(4,2)$. While in Section 
\ref{s32}, we will consider the proof of the transience of the 
$M_4(2,3)$-random walk.

\section{Overview of the balanced excited random walk}
\label{section-nonoverlapping}

The balanced excited random walk was introduced in its non-overlapping 
version by Benjamini, Kozma and Schapira in \cite{BKS11}. A precursor 
of the balanced excited random walk, is the {\it excited random walk}, 
introduced by Benjamini and Wilson in 2003 \cite{BW03}, which is 
defined in terms of a parameter $0<p<1$ as follows: the random walk 
$(X_n:n\ge 0)$ has the state space $\mathbb Z^d$ starting at $X_0=0$; 
whenever the random walk visits a site for the first time, it jumps 
with probability $(1+p)/2d$ in direction $e_1$, probability $(1-p)/2d$
in direction $-e_1$ and with probability $1/2d$ in the other 
directions; whenever the random walk visits a site which it already 
visited previously it jumps with uniform probability in directions 
$\pm e_i, 1\le i\le d$.  Benjamini and Wilson proved in \cite{BW03}
that the model is transient for $d>1$. A central limit theorem and a 
law of large numbers for $d>1$ was proven in \cite{BR07} and 
\cite{K07}.  A general review of the model can be found in 
\cite{KZ13}. Often the methods used to prove transience, 
the law of large numbers and the central limit theorem for the excited 
random walk, are based on the ballisticity of the model (the fact that the 
velocity is non-zero), through the use of regeneration times. This 
means that most of these methods are not well suited to study the 
balanced excited random walk, which is not ballistic. For the moment, 
a few results have been obtained for the balanced excited random walk, 
where basically for each case a different technique has been 
developed.  The first result was obtained by Benjamini, Kozma and 
Schapira in \cite{BKS11} for the $M_4(2,2)$ case is the following 
theorem. 

\medskip 

\begin{theorem}[{\bf Benjamini, Kozma and Schapira, 2011}]
  \label{bks} The $M_4(2,2)$-random walk is transient. 
\end{theorem}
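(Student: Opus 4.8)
The plan is to split the walk into its two two-dimensional projections and show that, although each projection is only a time-changed \emph{recurrent} planar walk, the pair behaves like a genuine transient four-dimensional walk. Write $S_n=(X_n,Y_n)$ with $X_n\in\mathbb Z^2$ the first two coordinates and $Y_n\in\mathbb Z^2$ the last two, and realize $S$ from two independent i.i.d.\ sequences $(\xi_i)_{i\ge1}$ and $(\eta_j)_{j\ge1}$ of increments, each uniform on the four neighbours of the origin in $\mathbb Z^2$: at a time $n$ at which $S_n$ is a new site, update $X$ by the next unused $\xi$; at a time $n$ at which $S_n$ is an old site, update $Y$ by the next unused $\eta$. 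Then $\hat X_k:=\xi_1+\cdots+\xi_k$ and $\hat Y_\ell:=\eta_1+\cdots+\eta_\ell$ are independent planar simple random walks, and $X_n=\hat X_{N_n}$, $Y_n=\hat Y_{\,n-N_n}$, where $N_n$ is the number of new-site steps performed strictly before time $n$; note $N_n$ differs from the size of the range $R_n=\#\{S_0,\dots,S_{n-1}\}$ by at most one. As a warm-up I would check that both clocks diverge, $N_n\to\infty$ and $n-N_n\to\infty$ a.s. If $N_n$ stayed bounded the walk would visit only finitely many sites, so eventually $X$ would freeze at some $x_0$ while $Y$ ran as the recurrent walk $\hat Y$, whose range diverges, so that infinitely many sites $(x_0,\hat Y_\cdot)$ would be visited, a contradiction; if $n-N_n$ stayed bounded then eventually $Y$ would freeze at some $y_0$ and $\hat X$, being recurrent, would revisit a position, making some later site $(\hat X_m,y_0)$ old although every late step is a new-site step, again a contradiction.

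The real work is a quantitative version of this dichotomy: I would aim to show that a.s.\ eventually $\min(N_n,\,n-N_n)$ grows fast enough that
$$
\sum_{n\ge 1}\frac{1}{(N_n\vee 1)\,\bigl((n-N_n)\vee 1\bigr)}<\infty .
$$
Since $(N_n\vee1)\bigl((n-N_n)\vee1\bigr)\ge \tfrac n2\min(N_n,n-N_n)$, it already suffices to have $\min(N_n,\,n-N_n)\ge n^{\varepsilon}$ eventually for some $\varepsilon>0$ (or even a lower bound merely beating $(\log n)^{1+\varepsilon}$). Equivalently, one wants the range $R_n\approx N_n$ and the number $n-N_n$ of old-site steps to be at least a small power of $n$; this is the same as a polynomial upper bound on the maximal multiplicity $\max_{x}\#\{m<n:S_m=x\}$, since $n\le R_n\cdot\max_x(\text{multiplicity})$. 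The mechanism is that the walk cannot run for a long stretch without discovering a new site: on such a stretch $X$ is pinned to finitely many values while $Y$ is forced to stay inside an already-visited set, and dually a stretch with very few old-site steps forces $\hat X$ to keep re-entering a thin slab; feeding these constraints into standard planar-walk estimates (range of order $t/\log t$, return probabilities, probabilities of hitting a prescribed slab) should produce the bound. I expect this to be the main obstacle, because the length of a ``no new site'' stretch and the current shape of the range are mutually dependent, so the estimate has to be arranged so that it can be bootstrapped from weaker a priori information up to the bound actually needed.

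To finish, I would condition on the two clocks and invoke the planar local central limit theorem to obtain, uniformly in $x\in\mathbb Z^4$, a bound of the form $\mathbb P(S_n=x\mid\text{clocks})\lesssim (N_n\vee1)^{-1}\bigl((n-N_n)\vee1\bigr)^{-1}$. Some care is needed here, because $\{N_n=k\}$ is \emph{not} independent of $(\hat X_k,\hat Y_{\,n-k})$: it is a function of the very increments $\xi_1,\dots,\xi_k,\eta_1,\dots,\eta_{n-k}$ that determine those endpoints, so the local estimate is best applied by peeling off one step at a time, using that each increment of $S$ is, conditionally on the past, uniform on a pair of coordinates; this conditioning point is precisely what, for the three-dimensional and the $M_4(2,3)$ analogues, is dealt with by the heavier martingale and geometric-coupling machinery of \cite{PSS16}. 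Combined with the summability from the previous step, this yields $\sum_n\mathbb P(|S_n|\le R)<\infty$ for every $R$; by Borel--Cantelli, $|S_n|\to\infty$ a.s., so every site is visited only finitely many times, i.e.\ the $M_4(2,2)$-random walk is transient.
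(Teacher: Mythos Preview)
Your decomposition $S_n=(\hat X_{N_n},\hat Y_{\,n-N_n})$ with $\hat X,\hat Y$ independent planar walks, the clock estimates, and the Borel--Cantelli finish all match the Benjamini--Kozma--Schapira argument; their Lemma~1 is exactly the quantitative form of your clock-growth step, giving $n/(C\log n)^2\le r_n\le 99n/100$ with probability $1-o(n^{-M})$. The gap is your central step, the conditional bound $\mathbb P(S_n=x\mid N_n)\lesssim (N_n)^{-1}(n-N_n)^{-1}$. You rightly flag that $\{N_n=k\}$ is a function of the same increments that determine $(\hat X_k,\hat Y_{n-k})$, but ``peeling off one step at a time'' does not repair this: each increment of $S$, conditionally on the past, is uniform on four directions lying entirely in \emph{one} of the two planes, so step-by-step anticoncentration only yields a two-dimensional rate unless you already know how many of a given block of steps land in each plane---which is the very clock dependence you are trying to bypass. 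And if you simply drop the constraint via $\mathbb P(\hat X_k=0,\hat Y_{n-k}=0,N_n=k)\le C/(k(n-k))$ and sum over $k$ in the window $[n/(C\log n)^2,\,99n/100]$, you obtain only $\mathbb P(S_n=0)\lesssim(\log\log n)/n$, which diverges when summed over $n$. The \cite{PSS16} machinery you allude to could in principle be imported, but that is a substantial detour, not a patch.

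The device in \cite{BKS11} that dissolves the dependence is to abandon the pointwise event $\{S_n=0\}$ in favour of the dyadic block event $\{0\in\{S_n,\ldots,S_{2n}\}\}$. On the high-probability event that both clocks lie in a deterministic window $I\subset[n/(C\log n)^2,2n]$ one has
\[
\{0\in\{S_n,\ldots,S_{2n}\}\}\subset\{0\in\{\hat X_m:m\in I\}\}\cap\{0\in\{\hat Y_\ell:\ell\in I\}\},
\]
and the two events on the right depend on $\hat X$ and $\hat Y$ separately, hence are independent. Lemma~2 of \cite{BKS11} bounds each factor by $O(\log\log n/\log n)$, so the product is $O\bigl((\log\log n/\log n)^2\bigr)$, which is summable over dyadic $n$. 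Replacing ``at time $n$'' by ``somewhere in $[n,2n]$'' is the missing idea that lets you exploit the independence of $\hat X$ and $\hat Y$ without ever conditioning on the clocks.
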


\medskip 

The proof of Theorem \ref{bks} is based on obtaining good enough 
estimates for the probability that a $2$-dimensional random walk 
returns to its starting point in a time interval $[n/c(\log n)^2, cn]$, 
for some constant $c>0$, and on the range of the random walk. 
This then allows to decouple using independence the first $2$
coordinates from the last $2$ ones. In this article, we will apply 
this method to derive the transience in the $M_4(4,2)$ and $M_4(2,4)$
cases of Theorem \ref{proposition}. 

In 2016, Peres, Sousi and Schapira in \cite{PSS16}, considered the 
case $M_3(1,2)$ proving the following result. 

\medskip 

\begin{theorem}[{\bf Peres, Schapira and Sousi, 2016}]
\label{pps}
The $M_3(1,2)$-random walk is transient. 
\end{theorem}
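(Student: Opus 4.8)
The plan is to reduce transience to a summable bound on return probabilities and then to establish that bound by decoupling the ``vertical'' and ``horizontal'' parts of the walk. Since $S$ is not Markovian, I first note that it suffices to show $\sum_{n\ge 0}\mathbb P(S_n=x)<\infty$ for every $x\in\mathbb Z^3$: this makes the expected number of visits to each site finite, hence each site is a.s.\ visited finitely often, and a countable union over $x$ gives that a.s.\ \emph{every} site is visited finitely often, i.e.\ transience. Decompose $S_n=(U_n,W_n)$ with $U_n:=S_n^{(1)}\in\mathbb Z$ and $W_n:=(S_n^{(2)},S_n^{(3)})\in\mathbb Z^2$. By the definition of the $M_3(1,2)$-walk, $U$ changes (by a fair $\pm1$) exactly at the times when $S_n$ is a new site, while $W$ performs a two-dimensional simple random walk step exactly at the times when $S_n$ is an old site; in particular both $U$ and $W$ are $(\mathcal F_n)$-martingales. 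Writing $R_n$ for the range $|\{S_0,\dots,S_n\}|$, the quadratic variation of $U$ up to time $n$ equals the number of vertical steps, which is $R_{n-1}$, while that of $W$ equals the number of horizontal steps, $n-R_{n-1}$.

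Next I would control the growth of the range through the inter-new-site gaps. After each vertical step the walk sits at some horizontal position and performs two-dimensional simple random walk on old sites until it reaches an unvisited site, at which moment a new vertical step occurs. The plan is to show that these gaps (the numbers of horizontal steps between consecutive new sites) are controlled, along the walk, by a sequence of i.i.d.\ geometric random variables; this is where the control of martingale increments by geometric variables enters. Two consequences follow: first, $R_n\to\infty$ a.s., so the fair $\pm1$ martingale $U$ is run for infinitely many steps; second, and quantitatively, with overwhelming probability the two quadratic variations $R_n$ and $n-R_n$ both split the time $n$ into comparable pieces, ruling out the degenerate scenarios in which almost all steps are of a single type.

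The heart of the argument is the return estimate $\mathbb P(S_n=0)\lesssim n^{-3/2}$ (up to corrections that remain summable), which matches the genuine three-dimensional exponent and is summable. I would obtain it by bounding the two coordinates separately after conditioning on the geometry that determines the new/old structure. For the vertical part, the increments of $U$ are i.i.d.\ fair $\pm1$ and, given the horizontal geometry, independent of it, so a one-dimensional local central limit theorem gives $\mathbb P(U_n=0\mid\cdots)\lesssim R_n^{-1/2}$. For the horizontal part I would use a martingale defocusing estimate: although $W$ is self-interacting and not a true random walk, its increment directions are conditionally uniform, and whenever its conditional quadratic variation $n-R_n$ is large its conditional law spreads over a region of area comparable to $n-R_n$, so that no single point carries more than $\lesssim(n-R_n)^{-1}$ mass. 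Combining the two bounds with the comparable split $R_n\asymp n-R_n\asymp n$ from the previous step yields $\mathbb P(S_n=0)\lesssim R_n^{-1/2}(n-R_n)^{-1}\asymp n^{-3/2}$, and summing over $n$ completes the proof; the same estimate, uniform enough in the starting configuration, handles a general site $x$.

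The main obstacle is precisely the decoupling. The label ``new'' or ``old'' of the site $S_n$ depends on the entire past of \emph{both} coordinates, so $U$ and $W$ are genuinely dependent and $W$ is not an ordinary two-dimensional walk but a martingale whose step times are dictated by the evolving geometry of the range. Making the defocusing bound $\sup_x\mathbb P(W_n=x\mid\mathcal F_m)\lesssim(\text{q.v.})^{-1}$ robust against this dependence — rather than assuming the independence one would have for a free walk — is the delicate technical core, and it is where the geometric control of the inter-new-site gaps is used to guarantee that the conditional quadratic variation actually grows at the required rate. I expect this martingale defocusing step, together with verifying that the gaps are geometrically dominated uniformly along the trajectory, to be the part demanding the most work.
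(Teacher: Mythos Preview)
Your overall strategy --- Borel--Cantelli via a summable return-probability bound, a martingale for one block of coordinates, and geometric control of increments --- is in the right neighborhood, but the decomposition is reversed relative to the approach the paper describes (due to Peres--Schapira--Sousi), and that reversal creates a genuine gap. In the paper's account one conditions on the \emph{last two} coordinates: at the successive times $\tau_k$ when they move they form a genuine two-dimensional simple random walk, so the bound $\asymp 1/n$ for that block is free. The \emph{first} coordinate, sampled at those times, is the martingale to which the defocusing machinery (Proposition~\ref{propo}) is applied; its quadratic variation is tied to the range of the planar walk (hence grows only like $n/\log n$, which the logarithmic slack in Proposition~\ref{propo} is designed to absorb), and its increments are dominated by i.i.d.\ geometric variables via the backtracking trick: from any new site the $\pm e_1$ step returns, with probability $1/2$, to the just-vacated (now old) site, so runs of \emph{new} sites are uniformly geometrically short.

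You swap the roles --- a local CLT for the first coordinate and a defocusing bound for the last two --- and this is where the plan breaks. The step ``the numbers of horizontal steps between consecutive new sites are controlled by i.i.d.\ geometric random variables'' is not correct: a horizontal step from an old site lands at a fresh site only if that particular lattice neighbor has never been visited, and there is no uniform positive lower bound on this probability once the visited set is large. The planar walk may have to cross a macroscopic visited region before it finds a new site, so the old-site runs are \emph{not} geometrically dominated, and with them your ``comparable split'' $R_n\asymp n-R_n\asymp n$ collapses (indeed, the correct argument never needs linear growth of the range). Consequently neither the $R_n^{-1/2}$ bound for the vertical part nor the $(n-R_n)^{-1}$ defocusing bound for the horizontal part can be combined into the claimed $n^{-3/2}$. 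If you invert the roles --- condition on the planar walk, make the first coordinate the martingale, and bound its increments by geometrics via backtracking --- your outline becomes precisely the paper's route.
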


\medskip 
  
The approach developed in \cite{PSS16} to prove Theorem \ref{pps}, 
starts with conditioning on all the jumps of the last two coordinates, 
and then looking at the first coordinate at the times when the last 
two move, which gives a martingale. It is then enough to obtain good 
estimates on the probability that this martingale is at $0$ at time 
$n$.  The proof of the $M_4(2,3)$-random walk case of Theorem 
\ref{proposition}, is based on a modification of the method used to 
prove Theorem \ref{pps}, where a key point is to obtain appropriate 
bounds for martingale increments (which will correspond to the first 
coordinate of the movement of the $M_4(2,3)$-random walk) in terms of 
i.i.d. sequences of geometric random variables.

\section{Proof of Theorem \ref{proposition}}
\label{prooft}

We will divide the proof of Theorem 
\ref{proposition} in three steps.  With the exception of the cases 
$M_4(1,4)$, $M_4(4,1)$, $M_4(2,4)$, $M_4(4,2)$ and $M_4(2,3)$, we will use an important result 
of Peres, Popov and Sousi \cite{PPS13}. The cases $M_4(1,4)$ and $M_4(4,1)$ will be derived as 
those in dimension $d\ge 7$. For the cases $M_4(2,4)$ and 
$M_4(4,2)$ we will show how the argument of \cite{BKS11} can be 
adapted. And the case $ M_4(2,3)$ is handled as in \cite{PSS16}. 

\medskip 

\subsection{The trace condition}
\label{d=54}
Here we will recall the so called trace condition of \cite{PPS13}
which is a general condition under which a generalized version of the 
balanced random walk is transient, 
and see how it can be used 
to prove Theorem \ref{proposition} for the cases different from 
$M_4(3,4)$, $M_4(4,3), M_4(3,3)$ and $M_4(4,4)$. 

Given $d\ge 1$ and $m\ge 1$, consider probability measures 
$\mu_1,\ldots,\mu_m$ on $\mathbb R^d$ and for each 
$1\le i\le m$, let $(\xi_n^i:n\ge 1)$ be an i.i.d. sequence 
of random variables distributed according to $\mu_i$. 
We say that a stochastic process $(\ell_k:k\ge 0)$ is 
an {\it adapted rule} with respect to a 
 filtration $(\mathcal{F}_n:n\ge 0)$ of the process, if for each $k\ge 
 0$, 
 $\ell_k$ is $\mathcal{F}_k$-measurable. 
We now define the random walk $(X_n:n\ge 0)$ generated 
by the probability measures $\mu_1,\ldots,\mu_m$ and the 
adapted rule $\ell$ by 

$$
X_{n+1}=X_n+\xi^{\ell_n}_{n+1},\qquad {\rm for}\quad n\ge 0. 
$$
Let   $\mu$ be a measure on $\mathbb R^d$. $\mu$ is called of {\it mean $0$} if $\int 
xd\mu=0$. 
The measure $\mu$ is said to have {\it $\beta$ moments} if for any 
random variable $Z$ distributed according to $\mu$, $||Z||$ has moment 
of order $\beta$. The covariance matrix of $\mu$, $Var(\mu)$, is defined as the 
covariance 
of $Z$. 

Given a matrix $A$, we call $\lambda_{max}(A)$ its maximal eigenvalue 
and $A^t$
its transpose. In \cite{PPS13}, the following result was proven. 
\medskip 


\begin{theorem}[Peres, Popov and Sousi, 2013]
  \label{theorempopov}
  Let $\mu_1,\ldots,\mu_m$ be measures in $\mathbb R^d$, $d\ge 3$, with 
  zero mean and $2+\beta$ moments, for some $\beta>0$.  Assume that 
  there is a matrix $A$ such that the trace condition is satisfied: 

  $$
  tr(A \ Var(\mu_i)\ A^t)>2\lambda_{max}(A\ Var(\mu_i)\ A^t) 
$$
for all $1\le i\le m$. Then any random walk $X$ generated by these 
measures and any adapted rule is transient. 
\end{theorem}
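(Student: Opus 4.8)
The plan is to build a radial super-harmonic Lyapunov function for the linearly transformed walk and read off transience from it. Write $V_i:=Var(\mu_i)$ and $\Sigma_i:=A\,V_i\,A^t$, and consider the transformed process $Y_n:=A X_n$. Conditionally on the rule, the increment $Y_{n+1}-Y_n=A\xi^{\ell_n}_{n+1}$ has mean $0$, covariance $\Sigma_{\ell_n}$, and still possesses $2+\beta$ moments. Since $|Y_n|\le \|A\|\,|X_n|$, one has $|Y_n|\to\infty\Rightarrow|X_n|\to\infty$, so it suffices to prove that $|Y_n|\to\infty$ almost surely, which is the natural notion of transience here and forces every bounded set, hence every site, to be visited finitely often. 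The trace condition enters through an elementary observation: because there are only finitely many measures, the quantity $c^*:=\min_{1\le i\le m} tr(\Sigma_i)/\lambda_{max}(\Sigma_i)$ is strictly larger than $2$, so we may fix an exponent $\alpha$ with $0<\alpha<c^*-2$.

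First I would estimate one step of the candidate potential $h(y):=|y|^{-\alpha}$. A second-order Taylor expansion, using $\nabla h(y)=-\alpha|y|^{-\alpha-2}y$ and $Hess\,h(y)=-\alpha|y|^{-\alpha-2}I+\alpha(\alpha+2)|y|^{-\alpha-4}yy^t$, kills the linear term by mean-zero and leaves a second-order contribution equal to
$$
\tfrac12\,\alpha\,|y|^{-\alpha-2}\Big((\alpha+2)\tfrac{y^t\Sigma_i y}{|y|^2}-tr(\Sigma_i)\Big)\le -\tfrac12\,\alpha\,|y|^{-\alpha-2}\big(tr(\Sigma_i)-(\alpha+2)\lambda_{max}(\Sigma_i)\big),
$$
where I used the Rayleigh bound $y^t\Sigma_i y\le \lambda_{max}(\Sigma_i)|y|^2$. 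By the choice of $\alpha$ the bracket is bounded below by a constant $\delta_0>0$ uniformly in $i$, so the second-order contribution is at most $-\delta\,|y|^{-\alpha-2}$ for some $\delta>0$. This is precisely where the trace condition is used, and it is the analogue of the classical computation making $|y|^{2-d}$ harmonic for $d$-dimensional simple random walk with $d\ge 3$.

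The next step is to show that the Taylor remainder does not overwhelm this negative term, and here the $2+\beta$ moments are essential. Splitting the increment $\eta:=A\xi^{\ell_n}_{n+1}$ according to whether $|\eta|\le |y|/2$ or not, on the first event the third-order remainder is bounded by $C|y|^{-\alpha-3}|\eta|^3$, and the interpolation $|\eta|^3\le (|y|/2)^{1-\beta}|\eta|^{2+\beta}$ produces a contribution $O(|y|^{-\alpha-2-\beta})$; on the complementary rare event Markov's inequality applied with the $2+\beta$ moment controls the remaining contribution, again at order $O(|y|^{-\alpha-2-\beta})$. Combining, there is $R_0$ such that for $|Y_n|\ge R_0$
$$
\mathbb E[h(Y_{n+1})\,|\,\mathcal F_n]\le h(Y_n)
$$
uniformly over the rule; that is, $h(Y_{n\wedge\tau})$ is a nonnegative supermartingale, where $\tau$ is the first return of $Y$ to the ball $B(0,R_0)$.

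Finally I would convert this into transience by a standard exit argument. Optional stopping between the sphere of radius $\rho$ and $B(0,R_0)$ shows that from $|Y_n|=\rho$ the conditional probability of ever returning to $B(0,R_0)$ is at most $(R_0/\rho)^{\alpha}$, which tends to $0$ as $\rho\to\infty$; since this bound is uniform in the non-Markovian rule, it can be iterated at successive returns to show that $B(0,R_0)$, and by the same reasoning every ball $B(0,R)$ with $R\ge R_0$, is visited only finitely often almost surely, whence $|Y_n|\to\infty$ and therefore $|X_n|\to\infty$. I expect the main obstacle to be the remainder estimate of the third paragraph: controlling $\mathbb E[h(Y_{n+1})-h(Y_n)-(\text{2nd order})\mid\mathcal F_n]$ uniformly over all $m$ measures and all directions of $y$ using only $2+\beta$ rather than three moments, in particular handling the possible near-singularity of $h$ when $y+\eta$ lands close to the origin, which may require a mild truncation of $h$ or an a-priori non-confinement estimate so that the escape argument is not disrupted.
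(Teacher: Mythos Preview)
The present paper does not contain a proof of this theorem; it is quoted from \cite{PPS13} and used only as a black box to dispose of the cases $d_1,d_2\ge 3$. So there is no ``paper's own proof'' to compare against.

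That said, your sketch is exactly the Lyapunov-function argument that underlies the result in \cite{PPS13}: pass to $Y_n=AX_n$, take $h(y)=|y|^{-\alpha}$ with $\alpha<\min_i tr(\Sigma_i)/\lambda_{\max}(\Sigma_i)-2$, and observe that the trace condition forces the second-order drift of $h$ to be strictly negative, while the $2+\beta$ moment hypothesis controls the Taylor remainder at order $|y|^{-\alpha-2-\beta}$. This is the correct mechanism, and your identification of the two delicate points---the remainder bound with only $2+\beta$ moments, and the need for an a~priori non-confinement input so that optional stopping between concentric spheres is legitimate under an arbitrary adapted rule---matches where the actual work in \cite{PPS13} lies. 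One small addition worth making explicit: the trace condition $tr(\Sigma_i)>2\lambda_{\max}(\Sigma_i)$ forces each $\Sigma_i$ to have rank at least $3$, which is what ultimately guarantees that no adapted rule can keep the walk confined to a bounded set and makes the exit-time argument go through.
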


\medskip 


It follows from Theorem \ref{theorempopov}, that whenever $d_1\ge 3$ and $d_2\ge 3$, the trace condition is satisfied, 
with $A=I$, 
for the two corresponding matrices associated to the first $d_1$ and last $d_2$ dimensions, and 
hence the $M_d(d_1,d_2)$-random walk is transient. Hence, by the discussion right after the statement of Theorem \ref{proposition} in 
Section \ref{introduction}, we see that the only cases which are not 
covered by Theorem \ref{theorempopov}, correspond to 

\begin{equation}
  \label{ddr}
  d_1-r\le 2, ~r\le 2\quad {\rm and}\quad d_2-r\le 2, 
 \end{equation}
 and 

 $$
 \min\{d_1,d_2\}\le 2. 
 $$
 But (\ref{ddr}) implies that $\max\{d_1,d_2\}\le 2+r$. Thus, 

 $$
 d_1+d_2=\max\{d_1,d_2\}+\min\{d_1,d_2\}\le 4+r, 
 $$
 so that $d=d_1+d_2-r\le 4$. This proves the transience for all the 
 cases when $d\ge 5$. Now note that in dimesion $d=4$ the random walks $M_4(3,3), M_4(3,4), M_4(4,3)$ and $M_4(4,4)$ satisfy $d_1\ge 3$ and $d_2\ge 3$, so that the trace condition of \cite{PPS13} is satisfied. 
 
 Finally, that the random walks $M_4(1,4)$ and $M_4(4,1)$ satisfy $r\ge 3$, $d_1 -r\ge 3$ or $d_2 -r\ge 3$, so that they are also transient. 

 \medskip

 \subsection{The random walks $M_4(2,4)$ and $M_4(4,2)$}
 \label{s24}
 Consider the $M_4(4,2)$-random walk and call $r_n$ the cardinality of its range at time 
 $n$.  Let us use the notation $S=(X,Y)$ for the $M_4(4,2)$-random 
 walk, where $X$ are the first two components and $Y$ the last two 
 ones. We will also call $r_n^{(1)}$ the number of times up to time 
 $n$ that the random walk jumped using the $X$ coordinates while it 
 was at a site that it visited for the first time and 
 $r_n^{(2)}:=r_n-r^{(1)}_n$.  In analogy with Lemma 1 of \cite{BKS11}, 
 we have the following result. 

 \medskip 

 \begin{lemma}
   \label{lemma1} For any $M>0$ and each $i=1,2$, there exists a 
   constant $C>0$ such that 

   \begin{equation}
     \label{ri}\mathbb P[n/(C\log n)^2\le r^{(i)}_n\le 99n/100]=1-o\left(n^{-M}\right). 
   \end{equation}
 \end{lemma}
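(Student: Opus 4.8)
The plan is to establish the two-sided bound in \eqref{ri} separately for $r_n^{(1)}$ and $r_n^{(2)}$, exploiting that $r_n^{(1)}+r_n^{(2)}=r_n\le n$, so an upper bound on one is essentially a lower bound on the other, and vice versa; thus it suffices to prove a lower bound $r_n^{(i)}\ge n/(C\log n)^2$ and an upper bound $r_n^{(i)}\le 99n/100$ for each $i$, with stretched-exponential (in fact polynomial-with-arbitrary-exponent) control on the exceptional probability. The lower bound on $r_n^{(2)}$ (the number of ``old-site'' steps) is the easy direction and mirrors the {\it Case $d_2-r\ge3$} argument already in the excerpt: among any three consecutive steps, the chance that at least one lands on a previously visited site is bounded below by a constant $c>0$ (if two consecutive steps hit new sites, a backtrack on the third, probability $1/(2d_1)=1/8$, forces an old site). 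A standard comparison with a sum of i.i.d. Bernoulli variables on the blocks $\{3j,3j+1,3j+2\}$ then gives $\mathbb P[r_n^{(2)}<\delta n]\le e^{-c'n}$ for suitable $\delta,c'>0$, which in particular is $o(n^{-M})$; and $r_n^{(2)}\le r_n\le n$ trivially, but we need the sharper $99n/100$, which I address below.

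For the lower bound on $r_n^{(1)}$ (equivalently the upper bound $r_n^{(2)}\le 99n/100$, i.e.\ the range grows linearly), the idea is that new sites are created at a positive rate. The cleanest route is to follow Lemma~1 of \cite{BKS11}: condition on the last two coordinates $Y$ and look at $X$ along the (stopping) times when the walk makes an $X$-step; on a new site $X$ performs a genuine $2$-dimensional simple random walk step, so the $X$-trajectory, run at its own clock, is a $2$-dimensional SRW with i.i.d.\ geometric holding times, independent of everything driving the $Y$-motion. For such a $2$-d SRW the range after $k$ steps is of order $k/\log k$ with overwhelming probability; pushing a fresh-site argument, between consecutive $X$-steps the walk typically reaches $\mathbb{Z}^2$-sites it has not seen, and one shows that a positive fraction of the first $r_n^{(1)}$ $X$-steps land on new sites, which self-consistently forces $r_n^{(1)}\gtrsim n/(\log n)^2$. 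A slightly more hands-on alternative avoiding sharp range asymptotics: partition time into blocks of length $L=(C\log n)^2$; on each block, with probability bounded below (uniformly, by the $2$-d local CLT / anti-concentration for the $X$-walk plus the geometric number of $X$-steps in the block being $\Theta(L)$ with high probability) the $X$-walk exits a ball of radius $\sqrt L/(\log n)$ around its position and thereby visits a site outside the current range, contributing a new point; summing these nearly-independent block successes over $n/L$ blocks and using a Chernoff/Azuma bound yields $r_n^{(1)}\ge n/L=n/(C\log n)^2$ off an event of probability $o(n^{-M})$.

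Finally, for the remaining upper bound $r_n^{(1)}\le 99n/100$: the number of new-site $X$-steps cannot be almost all steps, because the ``backtrack forces an old site'' mechanism guarantees a positive density of old-site steps, which are counted in $r_n^{(2)}$ and hence not in $r_n^{(1)}$; concretely $r_n^{(1)}=r_n-r_n^{(2)}\le n-r_n^{(2)}\le n-\delta n<99n/100$ once $\delta\ge1/100$, and if the constant $c$ above gives $\delta<1/100$ one simply enlarges the block from $3$ steps to a fixed $K$ steps so that the per-block old-site probability exceeds $1/50$, making $\delta>1/100$. The same i.i.d.-comparison/Chernoff estimate then controls the exceptional probability at the required $o(n^{-M})$ rate.

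I expect the main obstacle to be the conditioning/independence bookkeeping needed to make the $X$-coordinate-as-$2$d-SRW-with-geometric-holding-times picture rigorous: because whether a step uses $X$ or $Y$ depends on whether the current site is new, which in turn depends on the joint past of $X$ \emph{and} $Y$, the ``clocks'' are entangled, and one must argue carefully (as in \cite{BKS11}) that, conditionally on the entire $Y$-increment sequence and on the sequence of new/old labels, the $X$-increments at $X$-steps are still i.i.d.\ uniform on $\{\pm e_1,\pm e_2\}$ — together with the quantitative $2$-d range/anti-concentration input on the scale $(\log n)^2$ that produces the exact power of $\log n$ in \eqref{ri}.
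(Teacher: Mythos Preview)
You have misread the definition of $r_n^{(2)}$, and this derails most of the proposal. In the $M_4(4,2)$ model, at a \emph{new} site the walk uses all four coordinates; $r_n^{(1)}$ counts those new-site jumps that happen to land in the $X$-directions $\pm e_1,\pm e_2$, and $r_n^{(2)}:=r_n-r_n^{(1)}$ therefore counts the new-site jumps in the $Y$-directions $\pm e_3,\pm e_4$. It is \emph{not} the number of old-site steps (that quantity is $n-r_n$). Your backtrack argument correctly shows $n-r_n\ge\delta n$, i.e.\ $r_n\le(1-\delta)n$, but says nothing about $r_n^{(2)}$; and your route to $r_n^{(1)}\le 99n/100$ via ``$r_n^{(1)}\le n-r_n^{(2)}$ with $r_n^{(2)}\ge\delta n$'' cannot work, since with the correct definition $r_n^{(2)}\le r_n$ and the lemma only asserts $r_n\ge n/(C\log n)^2$, not a linear lower bound.

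The paper's proof is much shorter and sidesteps all the $2$-d range and clock-entanglement issues you anticipate. Step one: quote Lemma~1 of \cite{BKS11} directly to get $n/(C\log n)^2\le r_n\le 99n/100$ with probability $1-o(n^{-M})$. Step two: observe that each new-site jump is, independently, an $X$-jump or a $Y$-jump with probability $1/2$ (these are the i.i.d.\ $\xi_k$'s), so $r_n^{(1)}=\sum_{k\le r_n}\mathbf{1}\{\xi_k\in\{\pm e_1,\pm e_2\}\}$ is a Bernoulli$(1/2)$ thinning of $r_n$; a standard large deviation (Chernoff) bound then transfers the two-sided estimate on $r_n$ to each $r_n^{(i)}$ after enlarging $C$. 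The upper bound is in fact immediate from $r_n^{(i)}\le r_n\le 99n/100$. None of the block decompositions, anti-concentration inputs, or conditioning bookkeeping you outline is needed.
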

 \begin{proof} \noindent First note that in analogy to the proof Lemma 
   1 of \cite{BKS11}, we have that 

$$
\mathbb P[n/(C\log n)^2\le r_n\le 99n/100]=1-o\left(n^{-M}\right). 
$$
Since each time the random walk is at a newly visited site with 
probability $1/2$ it jumps using the $X$ random walk and with 
probability $1/2$ the $Y$ random walk, by standard large deviation 
estimates, we deduce (\ref{ri}). 
\end{proof}
\medskip

Now note that 
\begin{equation}
  \label{xy}
  \{(X_k,Y_k): k\ge 1\}=\{(U_1(r^{(1)}_{k-1}),~U_2(r^{(2)}_{k-1})+V(k-r_{k-1})):k\ge 1\}, 
\end{equation}
where $U_1$, $U_2$ and $V$ are three independent simple random walks 
in $\mathbb Z^2$. It follows from the identity (\ref{xy}) and Lemma 
\ref{lemma1} used to bound the components $r^{(1)}_n$ and $r^{(2)}_n$
of the range of the walk, that 

\begin{eqnarray}
  \nonumber 
  &
    \mathbb P[0\in \{S_n,\ldots,S_{2n}\}]\le\mathbb P[0\in \{ U(n/(C\log 
    n)^2 ),\ldots, U(2n)]\\
  \label{os}
  &\times\mathbb P[0\in \{ W(n/(C\log 
    n)^2 ),\ldots, W(2n)]+o(n^{-M}), 
\end{eqnarray}
where $U$ and $W$ are simple symmetric random walks on $\mathbb Z^2$. 
At this point, we recall Lemma 2 of \cite{BKS11}. 

\medskip 

\begin{lemma}[Benjamini, Kozma and Schapira, 2011]%
  \label{lemma2} Let $U$ be a simple random walk on $\mathbb Z^2$ and 
  let $t\in [n/(\log n)^3,2n]$. Then 

$$
\mathbb P[0\in \{ U(t ),\ldots, U(2n)]=O\left(\frac{\log\log n}{\log 
    n}\right). 
$$
\end{lemma}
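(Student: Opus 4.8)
The plan is to reduce the statement to the well-known asymptotics for the probability that a two-dimensional simple random walk avoids the origin over a long time window. First I would recall the standard fact that for a simple random walk $U$ on $\mathbb Z^2$ started at the origin, $\mathbb P[0\notin\{U(1),\ldots,U(m)\}]\asymp 1/\log m$, and more generally, by the local central limit theorem together with the potential-kernel estimates for $\mathbb Z^2$, that for $U$ started at a point at distance $\rho$ from the origin one has $\mathbb P[0\in\{U(0),\ldots,U(m)\}]\asymp \min\{1,\ \log(m^{1/2}/\rho)/\log m\}$ when $\rho\le m^{1/2}$, and this probability is $O(m/\rho^2)$ when $\rho\gg m^{1/2}$. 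The quantity we must bound is $\mathbb P[0\in\{U(t),\ldots,U(2n)\}]$ for $t\in[n/(\log n)^3,2n]$; conditioning on $U(t)$ and writing $\rho=|U(t)|$, this is at most $\mathbb E[\,\min\{1,\ C\log((2n)^{1/2}/\rho)/\log n\}\wedge C n/\rho^2\,]$.

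The key step is then to control the contribution according to the size of $\rho=|U(t)|$. Since $t\ge n/(\log n)^3$, the typical value of $\rho$ is of order $t^{1/2}\ge \sqrt n/(\log n)^{3/2}$, which is already within a polylogarithmic factor of $\sqrt n$; on this event $\log((2n)^{1/2}/\rho)=O(\log\log n)$, so the conditional return probability is $O(\log\log n/\log n)$, which is the desired bound. It remains to show that the atypically small values of $\rho$ contribute only $O(\log\log n/\log n)$ as well. For $\rho$ in a dyadic range $[2^{j},2^{j+1}]$ with $2^{j}$ much smaller than $t^{1/2}$, the probability that $|U(t)|\le 2^{j+1}$ is $O(2^{2j}/t)$ by the local CLT, while on that event the return probability is at most $1$; summing the geometric series $\sum_{j:\,2^{j}\le t^{1/2}} 2^{2j}/t = O(1)$ only gives $O(1)$, so one must be slightly more careful and pair the two estimates: the contribution of the dyadic block is $O\big(\min\{2^{2j}/t,\ 1\}\cdot \min\{1,\ \log(\sqrt n/2^{j})/\log n\}\big)$, and summing this over $j$ from $0$ to $\tfrac12\log_2 n$ one finds the sum is dominated by the terms with $2^{j}$ near $t^{1/2}$, again yielding $O(\log\log n/\log n)$. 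One can also simply cut at $\rho_0=\sqrt n/(\log n)^{10}$: the event $\{\rho\le\rho_0\}$ has probability $O(\rho_0^2/t)=O((\log n)^{-20}(\log n)^3)=O((\log n)^{-17})$, which is $o(\log\log n/\log n)$, and on $\{\rho>\rho_0\}$ the return probability is $O(\log\log n/\log n)$ directly.

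The main obstacle is purely the bookkeeping in the second step: one needs the two-sided potential-kernel asymptotics $a(x)=\tfrac{2}{\pi}\log|x|+\kappa+o(1)$ for $\mathbb Z^2$ (equivalently, sharp hitting-probability estimates for a disc) to get the logarithmic return probability with the correct constant-free form, and one must handle the regime $\rho\gtrsim\sqrt n$ separately using the transient-like bound $\mathbb P[0\in\{U(t),\ldots,U(2n)\}]=O(n/\rho^2)$, which also contributes $O(1/\log n)$ after integrating against the Gaussian tail of $U(t)$. None of these estimates is deep — they are standard for planar simple random walk — so the proof is a matter of assembling the local CLT, the potential kernel bound, and a dyadic decomposition; I would present it in that order and flag the $\rho\approx\sqrt n$ cutoff as the only place where a little care is required.
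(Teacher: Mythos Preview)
The paper does not prove this lemma at all: it is quoted verbatim from \cite{BKS11} and used as a black box in the $M_4(4,2)$ argument. So there is no ``paper's own proof'' to compare against here.

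That said, your sketch is correct and is essentially the argument one finds in \cite{BKS11}. The core idea is exactly yours: condition on $U(t)$, use the planar hitting estimate $\mathbb P_x[0\in\{U(0),\ldots,U(m)\}]\asymp \log(\sqrt m/|x|)/\log m$ for $|x|\le\sqrt m$, and exploit that $t\ge n/(\log n)^3$ forces $|U(t)|$ to be within a polylogarithmic factor of $\sqrt n$ with overwhelming probability. Your clean cutoff at $\rho_0=\sqrt n/(\log n)^{10}$ is a perfectly good way to dispose of the atypically-small-$\rho$ event, and is simpler than the full dyadic decomposition you sketch first (which also works but is overkill). The large-$\rho$ regime needs no separate care beyond noting the hitting probability is monotone in $\rho$ in the relevant range. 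No genuine gap.
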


\medskip 

Combining inequality (\ref{os}) with Lemma \ref{lemma2}, we conclude 
that there is a constant $C>0$ such that for any $n>1$ (see 
Proposition 1 of \cite{BKS11}) 
$$
\mathbb P[0\in \{ S_n ,\ldots, S_{2n}\}]\le C\left(\frac{\log\log 
    n}{\log n}\right)^2. 
$$
Hence, 

$$
\sum_{k=0}^\infty \mathbb P[0\in \{ S_{2^k} ,\ldots, 
S_{2^{k+1}}\}]<\infty, 
$$
and the transience of the $M_4(4,2)$-random walk follows form 
Borel-Cantelli. A similar argument can be used to prove the transience 
of the $M_4(2,4)$-random walk. 

\medskip 

\subsection{The $M_4(2,3)$-random walk}
\label{s32}
Here we will follow the method developed by Peres, Schapira and Sousi 
in \cite{PSS16}. We first state Proposition 2.1 of \cite{PSS16}.

\medskip 

\begin{proposition}[Peres, Schapira and Sousi, 2016]
  \label{propo}
  Let $\rho>0$ and $C_1, C_2>0$. Let $M$ be a martingale with 
  quadratic variation $V$ and assume that $(G_k:k\ge 0)$ is a sequence 
  of $i.i.d.$ geometric random variables with mean $C_1$ such that for 
  all $k\ge 0$, 

  \begin{equation}
    \label{six}
    |M_{k+1}-M_k|\le C_2 G_k. 
  \end{equation}
  For all $n\ge 1$ and $1\le k\le\log_2(n)$ let $t_k:=n-\frac{n}{2^k}$
  and 

  $$
  A_k:=\left\{V_{t_{k+1}}-V_{t_k}\ge\rho\frac{t_{k+1}-t_k}{(\log 
      n)^{2a}}\right\}. 
  $$
  Suppose that for some $N\ge 1$ and 
  $1\le k_1<\cdots< k_N<\log_2(n)/2$ one has that 

  \begin{equation}
    \label{five}
    \mathbb P\left(\cap_{i=1}^N A_{k_i}\right)=1. 
  \end{equation}
  Then, there exists constant $c>0$ and a positive integer $n_0$ such 
  that for all $a\in (0,1)$ and $n\ge n_0$ one has that 

  $$
  \mathbb P(M_n=0)\le\exp\left(-cN/(\log n)^a\right). 
  $$

\end{proposition}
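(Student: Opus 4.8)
The plan is to reduce the statement to a Freedman-type concentration inequality for martingales by exploiting the lower bounds on the quadratic variation that are guaranteed on the event $\cap_{i=1}^N A_{k_i}$, which by \eqref{five} has full probability. The starting point is the standard exponential supermartingale: for a parameter $\lambda>0$, the process $\exp(\lambda M_k - \tfrac{1}{2}\lambda^2 \widetilde V_k)$, where $\widetilde V_k$ is an appropriate predictable control on the conditional variances, is a supermartingale provided the increments are not too large relative to $\lambda^{-1}$. The point of hypothesis \eqref{six} is precisely to control the large-deviation behaviour of the increments: since $|M_{k+1}-M_k|\le C_2 G_k$ with $G_k$ i.i.d.\ geometric, the increments have uniformly bounded exponential moments, so one can absorb the contribution of $|M_{k+1}-M_k|^p$ terms for $p\ge 3$ into the quadratic term at the cost of constants, as long as $\lambda$ is taken to be a small enough constant (independent of $n$).

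First I would fix $\lambda$ small and write $\mathbb P(M_n=0)\le \mathbb P(|M_n|\le 1)\le e^{\lambda}\,\mathbb E[e^{-\lambda|M_n|}]$, or more symmetrically bound $\mathbb P(M_n=0)$ by $\mathbb P(M_n\ge 0)\wedge$ a Chernoff bound applied to $\pm M_n$; the cleanest route is to bound $\mathbb P(M_n = 0)\le \mathbb P(M_n\le 0)$ and $\mathbb P(M_n=0)\le\mathbb P(M_n\ge 0)$ and combine, but since $M_n=0$ forces both $e^{\lambda M_n}\le 1$ and $e^{-\lambda M_n}\le 1$ simultaneously, the honest statement is $\mathbb P(M_n=0)\le \mathbb E[\mathbf 1_{M_n=0}(e^{\lambda M_n}+e^{-\lambda M_n})/2]$; applying the optional-stopping/supermartingale bound to both $\pm M$ gives $\mathbb P(M_n=0)\le \mathbb E[e^{-\lambda^2 c' V_n}]$ for some constant $c'>0$ depending on $\lambda,C_1,C_2$. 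Next I would use \eqref{five}: on the full-probability event $\cap_{i=1}^N A_{k_i}$ we have $V_{t_{k_i+1}}-V_{t_{k_i}}\ge \rho\,(t_{k_i+1}-t_{k_i})/(\log n)^{2a}$ for each $i$, and since the intervals $[t_{k_i},t_{k_i+1}]$ are disjoint (as the $k_i$ are distinct) and each has length $t_{k_i+1}-t_{k_i}= n/2^{k_i+1}\ge n/2^{k_N+1}\ge \sqrt n/2$ using $k_N<\log_2(n)/2$, summing over $i$ yields $V_n\ge \sum_{i=1}^N (V_{t_{k_i+1}}-V_{t_{k_i}})\ge \rho N\sqrt n/(2(\log n)^{2a})$. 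Since this holds almost surely, $\mathbb E[e^{-\lambda^2 c' V_n}]\le \exp(-\lambda^2 c'\rho N\sqrt n/(2(\log n)^{2a}))$, and for $n$ large this is certainly at most $\exp(-cN/(\log n)^a)$ with $c=\lambda^2 c'\rho/2$, absorbing the enormous gain of $\sqrt n/(\log n)^a$ into the requirement $n\ge n_0$.

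The main obstacle, and the only genuinely delicate point, is verifying the supermartingale property with a constant $\lambda$ when the increments are only controlled by \emph{unbounded} variables $C_2 G_k$ rather than by a deterministic constant; one must check that $\mathbb E[\exp(\lambda(M_{k+1}-M_k))\mid \mathcal F_k]\le \exp(\tfrac12 \lambda^2 c'' \mathrm{Var}(M_{k+1}-M_k\mid\mathcal F_k))$ for some fixed $c''$, which requires comparing the conditional exponential moment of the increment to its conditional variance uniformly in $k$. This is where the geometric tails enter: bounding $\mathbb E[e^{\lambda|M_{k+1}-M_k|}\mid\mathcal F_k]\le \mathbb E[e^{\lambda C_2 G_k}]$, which is finite and equals a fixed constant for $\lambda$ below the critical exponential rate of the geometric law, lets one expand $e^{\lambda x}\le 1+\lambda x+\tfrac12\lambda^2 x^2 e^{\lambda|x|}$ and take conditional expectations; the cross term vanishes by the martingale property and the remainder is controlled by $\lambda^2$ times a bounded multiple of the conditional second moment. (If one only has the increment bound \eqref{six} but not a matching lower bound, one uses $\mathrm{Var}(M_{k+1}-M_k\mid\mathcal F_k)=\mathbb E[(M_{k+1}-M_k)^2\mid\mathcal F_k]$ directly, which is exactly $V_{k+1}-V_k$.) Once this uniform sub-Gaussian-type increment estimate is in hand the rest is the routine Chernoff/Freedman argument sketched above, and the quantitative conclusion with the stated form $\exp(-cN/(\log n)^a)$ follows, noting that the $2a$ in the exponent of $\log n$ defining $A_k$ is softened to $a$ in the conclusion precisely because of the extra polynomial-in-$n$ slack produced by the lengths of the intervals $[t_k,t_{k+1}]$.
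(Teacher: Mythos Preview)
The paper does not supply its own proof of this proposition: it is quoted as Proposition~2.1 of \cite{PSS16} (with the minor extension recorded in the subsequent Remark) and then used as a black box. So there is no in-paper argument to compare against, and I comment only on the soundness of your proposal.

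Your argument has a genuine gap at its central step. The exponential supermartingale $\exp(\lambda M_k-c'\lambda^2 V_k)$ yields \emph{concentration} bounds of Freedman type, namely upper bounds on $\mathbb P(M_n\ge t,\ V_n\le v)$; it does not yield the anti-concentration estimate $\mathbb P(M_n=0)\le \mathbb E[e^{-c'\lambda^2 V_n}]$ that you assert. Your chain $\mathbb P(M_n=0)\le \mathbb E\bigl[\mathbf 1_{M_n=0}(e^{\lambda M_n}+e^{-\lambda M_n})/2\bigr]$ is in fact an equality (both sides equal $\mathbb P(M_n=0)$), and the supermartingale inequality $\mathbb E[e^{\lambda M_n-c'\lambda^2 V_n}]\le 1$ points the wrong way to extract a small upper bound from it. A sanity check: for the simple symmetric random walk $V_n=n$ is deterministic and $\mathbb P(M_n=0)\asymp n^{-1/2}$, whereas your bound would give $e^{-c'\lambda^2 n}$. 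The extraneous factor $\sqrt n$ that appears in the exponent of your final estimate, and which you then simply discard, is another symptom of the same error: a correct argument cannot produce that much slack.

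The proof in \cite{PSS16} is not a Chernoff bound but a multi-scale \emph{defocusing} argument. One treats the $N$ intervals $[t_{k_i},t_{k_i+1}]$ one at a time: on $A_{k_i}$ the predictable quadratic variation grows by at least $\rho(t_{k_i+1}-t_{k_i})/(\log n)^{2a}$, so a second-moment (Paley--Zygmund/Doob) argument forces $|M|$ to reach order $\sqrt{t_{k_i+1}-t_{k_i}}/(\log n)^a$ with probability bounded below, and the geometric control \eqref{six} on the increments is then used to show that, once dispersed, the martingale is unlikely to land exactly at $0$ at the end of the interval. Each scale thus contributes a multiplicative factor at most $1-c/(\log n)^a$ to the probability of $\{M_n=0\}$, and the product over the $N$ scales gives $\exp(-cN/(\log n)^a)$. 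In particular, the passage from the exponent $2a$ in the hypothesis to $a$ in the conclusion is the square root coming from a dispersion estimate, not from an exponential moment bound.
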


\medskip 

\begin{remark}
  Proposition \ref{propo} is slightly modified with respect to 
  Proposition 2.1 of \cite{PSS16} since we have allowed the mean $C_1$
  of the geometric random variables to be arbitrary and the bound 
  (\ref{six}) to have an arbitrary constant $C_2$. 
\end{remark}

\medskip 


Let us now note that the $M_4(2,3)$-random walk $(S_n:n\ge 0)$ can be 
defined as follows. Suppose $(\zeta_n:n\ge 1)$ is a sequence of 
i.i.d. random variables taking each of the values $(0,\pm 1, 0, 0)$, 
$(0, 0, \pm 1, 0)$ and $(0, 0, 0, \pm 1)$ with probability $1/6$, 
while $(\xi_n:n\ge 1)$ is a sequence of i.i.d. random variables 
(independent from the previous sequence) taking each of the values 
$(0,\pm 1, 0, 0)$ and $(\pm 1, 0, 0, 0)$ with probability 
$1/4$. Define now recursively, $S_0=0$, and 
$$
S_{n+1}=S_n+\Delta_{n+1}
$$
where the step is 

$$
\Delta_{n+1}= \begin{cases} \xi_{r_n}\,, & \text{if }r_n=r_{n-1}+1\\
  \zeta_{n+1-r_n}\,, & \text{if }r_n=r_{n-1} 
  \end{cases}
$$
and $r_n=\#\{S_0,\dots,S_n\}$ as before is the cardinality of the range of the random walk at time 
$n$ (note that formally $r_{-1}=0$).

Let us now write the position at time $n$ of the $M_4(2,3)$ random 
walk as 

$$
S_n=(X_n,Y_n,Z_n,W_n). 
$$
Define recursively the sequence of stopping times 
$(\tau_k:k\ge 0)$ by $\tau_0=0$ and for $k\ge 1$, 

$$
\tau_k:=\inf\{n>\tau_{k-1}: (Z_n,W_n)\ne (Z_{n-1},W_{n-1})\}. 
$$
Note that $r_0=1$ and $\tau_k<\infty$ a.s. for all $k\ge 
0$. Furthermore, the process $(U_k:k\ge 0)$ defined by 

$$
U_k=(Z_{\tau_k}, W_{\tau_k}), 
$$
is a simple random walk in dimension $d=2$, and is equal to the simple 
random walk with steps defined by the last two coordinates of $\zeta$. Let us now 
call $P_U$ the law of $S$ conditionally on the whole $U$ process. Note 
that the first coordinate $\{X_n:n\ge 0\}$ is an 
$\mathcal F_n:=\sigma\{\Delta_k:k\le n\}$-martingale with respect to 
$P_U$, since 

$$
E_U(\,X_{n+1}-X_n\;|\;\mathcal 
F_n)=1_{r_n=r_{n-1}+1}\,E(\,\xi_{r_n}\cdot e_1\,|\,\mathcal F_n,\,U), 
$$
$U$ is $\sigma(\zeta_k:k\ge 1)$-measurable as it is defined only in 
terms of the sequence 
$\big(\zeta_k1_{\{\pi_{34}(\zeta_k)\neq 0\}}\big)_{k\ge 
  1}$, ($\pi_{34}$ being the projection in the \nth{3} and \nth{4}
coordinates), and 

$$
E[\,\xi_{r_n}\cdot e_1\,|\,\mathcal F_n,(\zeta_k:k\ge 1)]=0, 
$$
by independence.  Hence, $\{M_m:m\ge 0\}$ with $M_m:=X_{\tau_m}$, is a 
$\mathcal G_m$-martingale with respect to $P_U$, where 
$\mathcal G_m:=\mathcal F_{\tau_m}$.  To prove the theorem, it is 
enough to show that $\{(M_n,U_n):n\ge 0\}$ is transient (under $P$). 
Let us call $r_U(n)$ the cardinality of 
the range of the random walk 
$U$ at time $n$. For each $n\ge 0$ and $k\ge 0$, let 
\begin{equation}
  \label{tk}
  t_k:=n-n/2^k 
\end{equation}
and 
\begin{equation}
  \label{kk}
  \mathcal K:=\left\{k\in\{1,\ldots, (\log 
    n)^{3/4}\}:r_U(t_{k+1})-r_U(t_k)\ge\rho 
    (t_{k+1}-t_k)/\log n\right\}. 
\end{equation}
We will show that 

\begin{eqnarray}
  \nonumber 
  &P(M_n=U_n=0) 
  =E[P_U(M_n=0)1\{|\mathcal K|\ge\rho(\log n)^{3/4}, 
    U_n=0\}]\\   \label{four}
   & +E[P_U(M_n=0)1\{|\mathcal K|<\rho(\log n)^{3/4}, U_n=0\}], 
\end{eqnarray}
is summable in $n$, for $\rho=\rho_0$ chosen appropriately.  At this point, let 
us recall Proposition 3.4 of \cite{PSS16}, which is a statement about 
simple symmetric random walks. 

\medskip

\begin{proposition}[Peres, Schapira and Sousi, 2016] \label{prop2}
  For $k\ge 1$, consider $t_k$ as defined in (\ref{tk}). Then, for 
  $\mathcal K$ as defined in (\ref{kk}), we have that there exist 
  positive constants $\alpha, C_3, C_4$ and $\rho_*$, such that for 
  all $\rho<\rho_*$, 

$$
P(|\mathcal K|\le\rho (\log n)^{3/4}|U_n=0)\le C_3 e^{-C_4(\log 
  n)^\alpha}. 
$$
\end{proposition}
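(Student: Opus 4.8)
The plan is to prove Proposition~\ref{prop2} in two moves: first remove the conditioning on $\{U_n=0\}$ at the cost of a subpolynomial factor, and then prove an unconditional bound that outweighs that factor. Throughout write $N:=(\log n)^{3/4}$ and $\ell_k:=t_{k+1}-t_k=n/2^{k+1}$, and note that since $1\le k\le N$ one has $2^{k+1}\le 2^{N+1}=n^{o(1)}$, so that every $t_k$ and every chunk length $\ell_k$ is of order $n^{1-o(1)}$; in particular $\log t_k\sim\log\ell_k\sim\log n$, and the event $\{|\mathcal K|\le\rho N\}$ depends only on $U$ up to time $t_{N+1}$. Conditioning on $U$ up to $t_{N+1}$ and applying the local central limit theorem for the planar walk (the uniform bound $P_x(U_m=0)\le C/m$ together with $P_0(U_n=0)\ge c/n$ for even $n$) gives $P(|\mathcal K|\le\rho N\mid U_n=0)\le C'2^{N+1}\,P(|\mathcal K|\le\rho N)$; since $2^{N+1}=\exp((\log 2)(\log n)^{3/4})$ is subpolynomial, it suffices to prove an unconditional bound $P(|\mathcal K|\le\rho N)\le C''\exp(-c(\log n)^{3/4}\log\log n)$ for $\rho$ below a fixed threshold $\rho_*$, uniformly in such $\rho$.

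For the unconditional bound fix $\rho_*<\pi/4$, and recall that, the relevant times being of order $n^{1-o(1)}$, one has $E[r_U(t_{k+1})-r_U(t_k)]=\sum_{j=t_k+1}^{t_{k+1}}P(U_j\notin\{U_0,\dots,U_{j-1}\})=\sum_{j}P(U\text{ does not return to its start within }j\text{ steps})\sim\pi\ell_k/\log n$. For $\rho<\rho_*$, call a scale $k$ \emph{bad} if $r_U(t_{k+1})-r_U(t_k)<\rho\ell_k/\log n$, so that $\{|\mathcal K|\le\rho N\}$ forces at least $(1-\rho)N\ge(1-\rho_*)N$ bad scales. The key input is a \emph{conditional lower-tail estimate}: there is a null sequence $p=p(n)\to0$, independent of $\rho<\rho_*$, and for each $k\le N$ an $\mathcal F_{t_k}$-measurable regularity event $G_k$ with $P(G_k^{c})\le e^{-(\log n)^2}$, such that $P(k\text{ bad}\mid\mathcal F_{t_k})\le p$ on $G_k$. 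Granting this, for any $B\subseteq\{1,\dots,N\}$ one processes its elements in increasing order and uses the tower property to obtain $P\big(\{k\text{ bad}\ \forall k\in B\}\cap\bigcap_{k\in B}G_k\big)\le p^{|B|}$; summing over the at most $2^N$ choices of $B$ with $|B|\ge(1-\rho)N$ and adding $P(\bigcup_{k\le N}G_k^{c})\le Ne^{-(\log n)^2}$ gives $P(|\mathcal K|\le\rho N)\le 2^Np^{(1-\rho_*)N}+Ne^{-(\log n)^2}$. Once $n$ is large enough that $p(n)\le1/\log n$, the first term is at most $\exp(N(\log 2-(1-\rho_*)\log\log n))\le\exp(-\tfrac12 N\log\log n)$, which is the required bound; combining with the reduction above and absorbing small $n$ into $C_3$ proves Proposition~\ref{prop2} with, say, $\alpha=3/4$.

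It remains to explain the conditional lower-tail estimate. Given $\mathcal F_{t_k}$, the walk restarts from $Y_k:=U_{t_k}$, the set $R:=\{U_0,\dots,U_{t_k}\}$ is frozen, and $r_U(t_{k+1})-r_U(t_k)$ equals the number of sites visited during the next $\ell_k$ steps that lie outside $R$. Take $G_k$ to be the event that $c\,t_k/\log t_k\le r_U(t_k)\le C\,t_k/\log t_k$ and $|Y_k|\le\sqrt{t_k}\log t_k$; standard range and displacement bounds give $P(G_k^{c})\le e^{-(\log n)^2}$. On $G_k$ one needs two facts, after which conditional Chebyshev yields $P(k\text{ bad}\mid\mathcal F_{t_k})\le p(n)=o(1)$: (i) the conditional mean of the above count is at least $2\rho_*\ell_k/\log n$, and (ii) its conditional variance is $o((\ell_k/\log n)^2)$, uniformly over $k\le N$ and over $G_k$. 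Fact (i) uses that $R$ is dense only in a microscopic neighbourhood of $Y_k$, which affects only the $O((\log n)^{1+o(1)})$ smallest-$j$ contributions and is negligible, while away from that neighbourhood $R$ is sparse enough that a positive fraction of the non-returning increments of the chunk land outside $R$; this requires controlling the law of $R$ near $Y_k$ on $G_k$.

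Fact (ii) is the main obstacle. It amounts to bounding the second moment of the number of \emph{globally} new sites created by a chunk of $\ell_k$ steps given an arbitrary typical past, and it rests on (a conditional version of) the classical concentration of the planar range, $\mathrm{Var}(r_U(m))=o((\mathbb E r_U(m))^2)$ — morally, freezing and deleting the set $R$ can only damp the fluctuations relative to those of the local range of the chunk, but making this rigorous requires controlling the covariances between the indicators that $U_{t_k+i}$ and $U_{t_k+j}$ are globally new well enough that the cancellation responsible for the subquadratic variance survives the conditioning. This is precisely the estimate carried out in \cite{PSS16}, whose argument we follow.
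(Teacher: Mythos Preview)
The paper does not prove this proposition; it is quoted as Proposition~3.4 of \cite{PSS16} and used as a black box, so there is no in-paper argument to compare against. What follows is an assessment of your sketch on its own merits.

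Your first move---removing the conditioning on $\{U_n=0\}$ via the Markov property at time $t_{N+1}$ and the two-sided local CLT bound $c/n\le P(U_n=0)\le C/n$---is correct and standard, and the cost factor $2^{N+1}=\exp((\log 2)(\log n)^{3/4})$ is indeed harmless against a bound of the form $\exp(-c(\log n)^{3/4}\log\log n)$.

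The second move has the right architecture (bad scales, conditional tail bound, tower property, union over subsets), but the two ingredients you label Facts~(i) and~(ii) are asserted rather than proved, and they are the whole difficulty. For Fact~(i), the heuristic that the past range $R$ is ``sparse away from a microscopic neighbourhood of $Y_k$'' is misleading: the range of planar simple random walk up to time $t_k$ has cardinality of order $t_k/\log t_k$ inside a disk of area of order $t_k$, so its density is of order $1/\log t_k$ \emph{on average}, but it is highly nonuniform and locally two-dimensional; showing that it cannot swallow a positive fraction of the chunk's fresh sites requires a genuine argument about intersections of ranges, not a density count. For Fact~(ii) you explicitly defer to \cite{PSS16}, which makes the write-up circular.

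There is also a quantitative overclaim: the bound $P(G_k^{c})\le e^{-(\log n)^2}$ is false for the range part of $G_k$. The variance of $r_U(m)$ is of order $m^2/(\log m)^4$ (Le~Gall), so constant-factor deviations of $r_U(t_k)$ from its mean are, by Chebyshev, only $O((\log n)^{-2})$-unlikely; known large-deviation bounds improve this but not to $e^{-(\log n)^2}$. Fortunately your union bound only needs $P(G_k^{c})$ to be $o(\exp(-c(\log n)^{3/4}\log\log n))$ after summing $N$ terms, so a polynomial bound $n^{-c}$ would suffice---but you should either prove such a bound, or restructure $G_k$ so the range constraint is not needed.
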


\medskip

Choosing $\rho=\rho_0\le 1$ small enough, by Proposition \ref{prop2}, we have the 
following bound for the second term on the right-hand side of (\ref{four}), 

\begin{equation}
  \label{last}
  E[P_U(M_n=0)1\{|\mathcal K|<\rho_0(\log n)^{3/4}, U_n=0\}]
  \le C_3C_5\frac{1}{n}\exp\left(-C_6(\log n)^\alpha\right), 
\end{equation}
for some positive $C_6$ and $\alpha$, where we have used the fact that 
$P(U_n=0)\le \frac{C_5}{n}$ for some constant $C_5>0$. 

To bound the first term on the right-hand side of (\ref{four}), we 
will use Proposition \ref{propo} with $a=1/2$ and $\rho=\rho_0/4$. 
Let us first show that (\ref{five}) is satisfied. Indeed, note that 
for each $n\ge 0$ when $U_n$ is at a new site, 
$E_U[(M_{n+1}-M_n)^2|\mathcal G_n]\ge 1/2$. Therefore, for all 
$k\in\mathcal K$, with $\rho=\rho_0$, one has that 


\begin{eqnarray*}
  &V_{t_{k+1}}-V_{t_k}=\sum_{n=t_k+1}^{t_{k+1}}
    E_U[(M_n-M_{n-1})^2|\mathcal G_{n-1}]\\
  &\ge \big(r_U(t_{k+1}-1)-r_U(t_k-1)\big)/2 
    \ge \big(r_U(t_{k+1})-r_U(t_k)-1\big)/2 \\
  &\ge (\rho_0/4)(t_{k+1}-t_k)/(\log n)^{2a}. 
\end{eqnarray*}
Hence, on the event $|\mathcal K|\ge \rho_0(\log n)^{3/4}$, we have 
that there exist $k_1,\ldots,k_N\in\mathcal K$ with 
$N=[\rho_0(\log n)^{3/4}]$ such that 

$$
P_U\left(\cap_{i=1}^NA_{k_i}\right)=1. 
$$
Let us now show that there is a sequence of i.i.d. random variables 
$(G_k:k\ge 0)$ such that (\ref{six}) is satisfied with $C_1=24$ and 
$C_2=3$. Indeed, note that 

\begin{equation}
  \label{steps}
  |M_{n+1}-M_n|=|X_{\tau_{n+1}}-X_{\tau_n}|\le\sum_{k=\tau_n}^\infty 
  |X_{(k+1)\land\tau_{n+1}}-X_{k\land\tau_{n+1}}|. 
\end{equation}
Note that the right-hand side of (\ref{steps}) is the number of steps 
of $X$ between times $\tau_n$ and $\tau_{n+1}$.  Now, at each time $k$
(with $k$ starting at $\tau_n$) that a step in $X$ is made there is a 
probability of at least $\frac{1}{4^2}\times\frac{2}{3}=\frac{1}{24}$
that the random walk $S$ makes three succesive steps at times $k+1$, 
$k+2$ and $k+3$, in such a way that in one of them a step in $U$ is 
made and at most two of these steps are of the $X$ random walk: if the 
random walk is at a site previously visited at time $k$, with 
probability $2/3$ at time $k+1$ the $U$ random walk will move; if the 
random walk is at a site which it had never visited before at time 
$k$, with probability $\frac{1}{4^2}\times\frac{2}{3}=\frac{1}{24}$, 
there will be $3$ succesive steps of $S$ at times $k+1$, $k+2$ and 
$k+3$, with the first $2$ steps being of the $X$ random walk and the 
third step of $U$ (we just need to move in the $e_1$ direction using 
$X$ at time $k+1$, immediately follow it at time $k+2$ by a reverse 
step in the $-e_1$ direction using $X$ again, and then immediately at 
time $k+3$ do a step in $U$). Since this happens independently each 
$3$ steps in the time scale of $X$ (time increases by one unit 
whenever $X$ moves), we see that we can bound the martingale 
increments choosing i.i.d. geometric random variables $(G_k:k\ge 0)$
of parameter $1/24$ in (\ref{six}) multiplied by $3$. 


\medskip 

\begin{remark} The sequence of i.i.d. geometric random variables 
  constructed above is not the optimal one, in the sense that it is 
  possible to construct other sequences of i.i.d. geometric random 
  variables of parameter larger than $1/24$. 
\end{remark}
\medskip

Since now we know that (\ref{five}) and (\ref{six}) are satisfied, by 
Proposition \ref{propo}, there exist $n_0\ge 1$ and $C_7>0$ such that 
on the event $|\mathcal K|\ge \rho_0(\log n)^{3/4}$ we have that for $n\ge n_0$, 

$$
P_U(M_n=0)\le e^{-C_7\rho_0\frac{(\log n)^{3/4}}{(\log n)^{1/2}}}. 
$$
Hence, for $n\ge n_0$ we have 

\begin{equation}
  \label{last2}
  E[P_U(M_n=0)1\{|\mathcal K|\ge\rho_0(\log n)^{3/4}, 
  U_n=0\}]\le C_5\frac{1}{n}e^{-C_7\rho_0\frac{(\log n)^{3/4}}{(\log n)^{1/2}}}. 
\end{equation}
Using the bounds (\ref{last}) and (\ref{last2}) back in (\ref{four}) 
gives us that there exist constants $C_8>0$, $C_9>0$ and some 
$\beta>0$, such that

$$
P(M_n=U_n=0)\le\frac{1}{n} C_8e^{-C_9(\log n)^\beta}
$$
By the Borel-Cantelli lemma, we conclude that the process $(M,U)$ is 
transient, which gives the transience of $S$.

\begin{acknowledgement}
Daniel Camarena and Gonzalo Panizo thank the support of Fondo Nacional de Desarrollo 
  Cient\'\i fico, Tecnol\'ogico y de Innovaci\'on Tecnol\'ogica 
  CG-176-2015. Alejandro Ram\'\i rez thanks the support of
 Iniciativa Cient\'\i fica Milenio 
  and of Fondo Nacional de Desarrollo Cient\'\i fico y Tecnol\'ogico 
  grant 1180259
\end{acknowledgement}
%

%
%

\begin{thebibliography}{99.}

  \bibitem{BKS11} {\sc I. Benjamini, G. Kozma and B. Schapira}. 
  \newblock{A balanced excited random walk.}  \newblock{\it 
    C. R. Acad. Sci. Paris, Ser. I} {\bf 349}, 459-462 (2011). 


\bibitem{BW03} {\sc I. Benjamini and D. Wilson}. 
  \newblock{Excited random walk.}  \newblock{\it 
    Electron. Comm. Probab.} {\bf 9}, 86-92 (2003). 


\bibitem{BR07} {\sc J. B\'erard and A. F. Ram\'\i rez}. 
  \newblock{Central limit theorem for the excited random walk in 
    dimension $d\ge 2$.}  \newblock{\it Electron. Comm. Probab.} {\bf 
    12}, 303-314 (2007). 


\bibitem{K07} {\sc G. Kozma}.  \newblock Non-classical 
  interacting random walks.  \newblock Problem session, Oberwolfach 
  Rep., 4(2007), No.2, 1552. Abstracts from the workshop held May 
  20–26, 2007, organized by F. Comets and M. Zerner. 

\bibitem{KZ13} {\sc E. Kozygina and M. Zerner}. 
  \newblock{Excited random walks: results, methods, open problems.}
  \newblock{\it Bull. Inst. Math. Acad. Sin. (N.S.)} {\bf 1}, 105-157 
  (2013). 


\bibitem{PPS13} {\sc Y. Peres, S. Popov and P. Sousi}. 
  \newblock{On recurrence and transience of self-interacting random 
    walks.}  \newblock{\it Bull. Braz. Math. Soc. (N.S.)} {\bf 44}, 
  841-867 (2013). 


\bibitem{PSS16} {\sc Y. Peres, B. Schapira and P. Sousi}. 
  \newblock{Martingale defocusing and transience of a self-interacting 
    random walk.}  \newblock{\it Ann. Inst. Henri Poincar\'e 
    Probab. Stat.} {\bf 52}, 1009-1022 (2016).


\end{thebibliography}
%

\end{document}